\newtheorem{thm}{Theorem}[section]
\newtheorem{prop}[thm]{Proposition}
\theoremstyle{definition}
\newtheorem{defn}[thm]{Definition}
\theoremstyle{remark}
\newtheorem{rem}[thm]{Remark}
\numberwithin{equation}{section}
\newcommand{\NN}{\mathbb N}
\newcommand{\RR}{\mathbb R}
\newcommand{\EE}{\mathcal E}
\newcommand{\DD}{\mathcal D}
\begin{document}

%\hskip5cm\includegraphics{memo.eps}

\vskip-2.5cm
%%%%%% TO BE ENTERED BY THE AUTHOR(S)
%%%
%%% ENTER TITLE
\title{On the exponential ultradistribution semigroups in Banach spaces}

%%% AUTHOR(S) FULL NAMES, AND EMAIL ADDRESSES
\author{Marko Kosti\' c}
\address{Faculty of Technical Sciences,
University of Novi Sad,
Trg D. Obradovi\' ca 6, 21125 Novi Sad, Serbia}
\email{marco.s@verat.net}

\author{Stevan Pilipovi\' c}
\address{Department for Mathematics and Informatics,
University of Novi Sad,
Trg D. Obradovi\' ca 4, 21000 Novi Sad, Serbia}
\email{pilipovic@dmi.uns.ac.rs}

\author{Daniel Velinov}
\address{Department for Mathematics, Faculty of Civil Engineering, Ss. Cyril and Methodius University, Skopje,
Partizanski Odredi
24, P.O. box 560, 1000 Skopje, Macedonia}
\email{velinovd@gf.ukim.edu.mk}
%%% AND CORRESPONDINGLY FOR OTHER AUTHORS, IF THERE ARE MORE AUTHORS
%%% ENTER ABBREVIATED AUTHOR(S) NAMES FOR PAGE HEADINGS
%\newcommand{\AuthorNames}{M. Kosti\'c, S. Pilipovi\'c, D. Velinov}
%%% IF THERE ARE MORE THAN TWO AUTHORS WRITE
%%% \newcommand{\AuthorNames}{First Author et al.}
%%%

%%% ENTER MSC, KEYWORDS, RECEIVED, EDITOR, THANKS FOR FINANCIAL SUPPORT FOR RESEARCH
\newcommand{\FilMSC}{Primary 47D06; Secondary 47D60, 47D99.}
\newcommand{\FilKeywords}{Exponential ultradistribution semigroups; Holomorphic exponential ultradistribution semigroups; Tempered ultradistributions; Quasi-equicontinuous semigroups.}
\newcommand{\FilCommunicated}{Dragan S. Djordjevi\'c}
\newcommand{\FilSupport}{The first two authors are supported by Grant No. 177024 of Ministry of Science and Technological Development, Republic of Serbia. The third author is supported by ...}
%%% If you do not want to thank for the financial support of the research, remove
%%% the previous line (i.e., leave \FilSupport undefined)
%%%%%%%%%%%%%%%%%%%%%%%%%%%%%%%%%%%%%%%%%%%%%%%%%%

\begin{abstract}
In this paper, we continue our previous research studies of exponential ultradistribution semigroups in Banach spaces. The existence and uniqueness of analytical solutions of abstract fractional relaxation equations associated with the generators of exponential ultradistribution semigroups have been considered. Some other results are also proved.
\end{abstract}

\maketitle

\section{Introduction and preliminaries }
The class of (exponential) distribution semigroups in Banach spaces was introduced by J. L. Lions in \cite{li121}. Along with the papers \cite{a11} by W. Arendt on integrated semigroups and vector-valued Widder representation theorem, and \cite{c62} by G. Lumer and I. Cioranescu on $K$-convoluted semigroups, the paper \cite{li121} has been a motivation for
a great number of other authors to study various classes of (ultra-)distribution semigroups in the setting of Banach spaces or, more generally, locally convex spaces (cf. \cite{a43}, \cite{a22}, \cite{l115}-\cite{keya}, \cite{kniga}, \cite{novi}, \cite{me152}-\cite{n181}, \cite{yoshinaga}-\cite{w241} and references cited therein for further information). Following the approaches employed in \cite{ku112}-\cite{li121}, the authors of \cite{kps} have defined the classses of L-ultradistribution semigroups and ultradistribution semigroups with non-densely defined generators. In \cite{kpsv}, some structural characterizations of ultradistribution semigroups and exponential ultradistribution semigroups have been proved. Motivated by the ideas from D. Fujiwara's research of exponential distribution semigroups \cite{fuji}, we shall prove some new characterizations of spaces of vector-valued tempered ultradistributions in Proposition \ref{lora}. In Theorem \ref{glass}, we consider the existence and uniqueness of analytical solutions of abstract fractional relaxation equations associated with the generators of exponential ultradistribution semigroups. Some new qualitative properties of solutions of the first order abstract Cauchy problem $
(ACP)$ clarified below have been proved in the case that the operator $A$ generates an exponential ultradistribution semigroup of $(M_{p})$-class and that the initial value
$x$ belongs to the abstract Beurling space associated with the operator $A.$

We use the standard terminology throughout the paper. We impose the following conditions (cf. \cite{k82}) on the sequence $(M_p)_{p\in {\mathbb N}_{0}}$ of positive real numbers satisfying $M_{0}=1$:\\
$(M.1)$ $M^2_{p}\leq M_{p-1}M_{p+1}$ for $p\in\NN$;\\
$(M.2)$ For some $A,H>0$,
$M_p\leq AH^p\min_{0\leq q\leq p}M_{p-q}M_q,\quad p,q\in\NN;$\\
$(M.3)':$
$$
\sum_{p=1}^{\infty}\frac{M_{p-1}}{M_p}<\infty.
$$
Every employment of the condition\vspace{0.1cm} \newline \noindent
$(M.3):$
$$
\sup_{p\in\mathbb{N}}\sum_{q=p+1}^{\infty}\frac{M_{q-1}M_{p+1}}{pM_pM_q}<\infty,
$$
which is a slightly stronger than $(M.3)',$ will be explicitly emphasized.
The Gevrey sequence $M_p=p!^s$, $s>1$ satisfies all the above conditions. The associated function $M(\rho)$ of sequence $(M_p)$ on $(0,+\infty)$ is defined by
$M(\rho)=\sup_{p\in\NN}\ln \frac{\rho^p}{M_p},\ \rho>0.$ We know that he
function $t\mapsto M(t)$, $t\geq 0$ is increasing as well as that
$\lim_{\lambda\to\infty}M(\lambda)=\infty$ and that the
function $M(\cdot)$ vanishes in some open neighborhood of zero. Put $m_{p}:=M_{p}/M_{p-1},$ $p\in {\mathbb N}.$ Then $(M.1)$ implies that the sequence $(m_{p})_{p\in {\mathbb N}}$ is increasing.\\
%Here we suppose that the sequence $(M_p)$ satisfies $(M.1)$, $(M.2)$ and $(M.3)$.
\indent The spaces of ultradifferentiable functions of $\ast$-class and ultradistributions of $\ast$-class are defined in the sequel. Let $\emptyset \neq \Omega \subseteq \RR,$ and let $\emptyset \neq K\Subset\Omega$ denote a compact set in $\Omega$.
%Let $K$ be a regular compact set in $\RR^n$ and $\Omega$ an open set in $\RR^n$.
Set, for every $h>0,$
$$
\DD_K^{\{M_p\},h}:=\bigl\{\varphi\in C^{\infty}(\RR^n)  : \mbox{supp}(\varphi) \subseteq K\mbox{ and there exists }C>0\mbox{ such that }\|D^{\alpha}\varphi\|_{C(K)}\leq Ch^{-\alpha}M_{\alpha},\ \alpha=0,1,2,\cdot \cdot \cdot \bigr \}.
$$
Equipped with the norm $\|\cdot\|_{\DD_K^{\{M_p\},h}}:=\sup_{x\in K,\alpha \in {\mathbb N}_{0}}\frac{|D^{\alpha}\cdot(x)|}{h^{-\alpha}M_{\alpha}},$
$\DD^{\{M_p\},h}_{K}$ becomes a Banach space. Define also, as locally convex spaces,
$\DD_K^{(M_p)}:=\lim{\rm  proj}_{h\rightarrow +\infty }\DD_K^{\{M_p\},h}$,
$\DD^{(M_p)}(\Omega):=\lim{\rm  ind}_{{K\Subset\Omega}}\DD_K^{(M_p)}$,
$\DD_K^{\{M_p\}}:=\lim{\rm  ind}_{h\rightarrow 0}\DD_K^{\{M_p\},h}$ and
$\DD^{\{M_p\}}(\Omega):=\lim{\rm  ind}_{{K\Subset\Omega}}\DD_K^{\{M_p\}}$. The notions of spaces
$\EE^{(M_p)}(K)$,
$\EE^{(M_p)}(\Omega)$,
$\EE^{\{M_p\}}(K)$ and
$\EE^{\{M_p\}}(\Omega)$ will be understood in the sense of \cite{k91}.

The spaces of tempered ultradistributions of the Beurling,
resp., the Roumieu type, are defined in \cite{pilip} as duals of the following test spaces
$$
\mathcal{S}^{(M_p)}(\mathbb{R}):=\lim \text{proj}_{h\to\infty}\mathcal{S}^{M_p,h}(\mathbb{R}),
\mbox{ resp., }\mathcal{S}^{\{M_p\}}(\mathbb{R}):=\lim \text{ind}_{h\to 0}\mathcal{S}^{M_p,h}(\mathbb{R}),
$$
where for each $h>0,$
\begin{align*}
\mathcal{S}^{M_p,h}(\mathbb{R}):=\bigl\{\phi\in C^\infty(\mathbb{R}):\|\phi\|_{M_p,h}<\infty\bigr\}
\end{align*}
and
\begin{align*}
\|\phi\|_{M_p,h}:=\sup\Biggl\{\frac{h^{\alpha+\beta}}{M_\alpha M_\beta}\bigl(1+t^2\bigr)^{\beta/2}\bigl|\phi^{(\alpha)}(t)\bigr|:t\in\mathbb{R},
\;\alpha,\;\beta\in\mathbb{N}_0\Biggr\}.
\end{align*}
The  common  notation for  symbols  $(M_{p})$  and
$\{M_{p}\} $ will be $\ast .$

It is said that a function $P(\xi ) = \sum _{\alpha \in {\mathbb N_{0}}} a_{\alpha } \xi ^{\alpha
} ,\ \xi \in {\RR}$
is an ultrapolynomial of the class $(M_{p}) ,$ resp. $\{M_{p}\}$,
if the coefficients $a_{\alpha }$ satisfy the estimate
\begin{equation}\label{ocaa}
\mid a_{\alpha } \mid  \leq \frac{C L^{\alpha }}{M_{\alpha}} ,\ \alpha \in {\NN_{0}} ,
\end{equation}
for  some  $L > 0 $ and $C>0,$ resp. for every $L > 0 $  and  some
$C_{L} > 0. $
The  corresponding  operator  $P(D) =  \sum_{\alpha   \in {\mathbb N_{0}}}  a_{\alpha
}D^{\alpha  }$  is said to be an ultradifferential operator  of  the  class
$(M_{p})$, resp. $\{M_{p}\}$ (see \cite{k91}).

Let $X$ and $Y$ be two Hausdorff sequentially complete locally convex spaces over the field of complex numbers. By $L(X,Y)$ we denote the space consisting of all continuous linear mappings from $X$ into $Y;$ $L(X)\equiv L(X,X).$
A $C_0$ semigroup $(T(t))_{t\geq 0}$ on $X$ (cf. \cite{komura} and \cite{yosida} for the notion) is called locally equicontinuous if the family $\{T(t)\, : \, t\in[0,t_0]\}$ is equicontinuous for all $t_0>0$; $(T(t))_{t\geq 0}$  is called (globally) equicontinuous if the family $\{T(t) \, : \, t\geq0\}$ is equicontinuous. Recall that any $C_{0}$-semigroup in a barreled space $X$ is automatically locally equicontinuous (\cite{komura}). If for some $\alpha\in{\mathbb R}$ the $C_{0}$-semigroup $(T_{\alpha}(t)\equiv e^{-\alpha t}T(t))_{t\geq 0}$ is equicontinuous, then $(T(t))_{t\geq 0}$ is called $\alpha$ quasi--equicontinuous.

Henceforth we shall always assume that $(E,\| \cdot\|)$ is a complex Banach space. Let $A$ be a closed linear operator with domain $D(A)$ and range $R(A)$ contained in $E.$ Since no confusion
seems likely, we will identify $A$ with its graph; $[D(A)]$ denotes the Banach space $D(A)$ equipped with the graph norm $\|x\|_{[D(A)]}:=\|x\|+\|Ax\|,$ $x\in D(A).$ As is usually the case, $\rho(A)$ denotes the resolvent set of $A$ and $R(\lambda : A)$ denotes the operator $(\lambda I-A)^{-1}$ ($\lambda \in \rho(A)$), where $I$ stands for the identity operator on $E.$ If $F$ is a linear subspace of $E$, then we define the part of $A$ in $F,$ $A_{|F}$ for short, by $A_{|F}:=\{(x,y) \in A : x,y\in F\}.$ Set $D_{\infty}(A):= \bigcap_{n=1}^{\infty}D(A^n)$ and $\Sigma_{\alpha}:=\{ z\in {\mathbb C} \setminus \{0\} :
|\arg (z)|<\alpha \}$ ($\alpha \in (0,\pi]$). We denote by ${\mathcal A}_{\alpha}(E)$ the vector space consisting of all analytic mappings from $\Sigma_{\alpha}$ into $E$  ($\alpha \in (0,\pi]$).
The following system of seminorms $\|\cdot\|_{n}\equiv \|\cdot\|+\cdot \cdot \cdot+\|A^n \cdot\|$, $n\in{\mathbb N}_{0}$ turns $D_{\infty}(A)$ into a Fr\' echet space. We define the abstract Beurling space of $(M_p)$ class
associated to a closed linear operator $A$ as in \cite{ci1}.
Put $E^{(M_p)}(A):=\lim \mbox{proj}_{h\to+\infty}E^{\{M_p\}}_h(A)$, where
$$
E^{\{M_p\}}_h(A):=\Biggl\{x\in D_{\infty}(A):\|x\|^{\{M_p\}}_h \equiv \sup_{p\in\mathbb{N}_0}\frac{h^p\|A^px\|}{M_p}<\infty\Biggr\}.
$$
Then $(E^{\{M_p\}}_h(A)$, $\|\cdot\|^{\{M_p\}}_h)$ is a
Banach space, $E^{\{M_p\}}_{h'}(A)\subseteq E^{\{M_p\}}_h(A)$
if $0<h<h'<\infty,$ $E^{(M_p)}(A)$ is a Fr\' echet space, and $E^{(M_p)}(A)$ is a dense subspace of $E$
whenever $A$ is the generator of a regular $(M_p)$-ultradistribution semigroup (\cite{ci1}).

By ${\mathcal D}'^{*}_+({\mathbb R},L(E))$ we denote the space consisting of those $L(E)$-valued ultradistributions $G$ of $\ast$-class for which supp$(G)\subseteq [0,\infty)$ (cf.
\cite{kniga} for the notion) and
by $\DD_0^{\ast}$ we denote the space consisting of those ultradifferentiable functions $\varphi$ of $\ast$-class
such that $\varphi(t)=0$ for $t\leq 0.$ We refer the reader to \cite{k91}-\cite{k82} for further information concerning the convolution of (vector-valued) ultradifferentiable functions and ultradistributions of $\ast$-class.

\begin{defn} (\cite{kps}) \label{d6.1}
Let $G\in {\mathcal D}'^{*}_+({\mathbb R},L(E)).$ Then it is said that $G$ is an
L-ultradistribution semigroup of $*$-class
if:
 \newline
$ (U.1)\;\;  G(\phi*\psi) = G(\phi)G(\psi),\quad \phi, \psi \in
{\mathcal D}^{*}_0;\; $
\newline
% where $ \phi*\psi$  is the usual
%convolution (elements of ${\mathcal  D}^{*}_0$ are supported by $[0,\infty)$);
$ (U.2)\;\;    {\mathcal N} (G) := \bigcap_{\phi\in {\mathcal
D}^{*}_0} N (G(\phi)) = \{0\};\;$
 \newline
 $ (U.3)\;\;   {\mathcal
R}(G) :=
 \bigcup_{\phi \in {\mathcal D}^{*}_0}
R(G(\phi)) \mbox{ is dense in } E;$
\newline
$ (U.4)$  For every $x \in {\mathcal R}(G)$ there exists a function
$u \in C([0,\infty),E)$ satisfying
$$
u(0) = x \mbox{ and } G(\phi)x =
\int\limits_{0}^{\infty}\phi(t)u(t)\, dt,\ \phi \in {\mathcal
D}_0^{*}.
$$
If $G\in {\mathcal D}'^{*}_+({\mathbb R},L(E))$ satisfies
\newline
$ (U.5 )\;\;   G(\phi*_0\psi) = G (\phi)G(\psi),\quad \phi,\psi \in
{\mathcal  D}_0^{*}\;$, where
$$
(f\ast_{0}g)(t):=\int
\limits^{t}_{0}f(t-s)g(s)\, ds,\; t\in\mathbb{R},
$$
then it is said that $G$ is a pre-(UDSG) of
$*$-class. If $(U.5)$ and $(U.2)$ are fulfilled for $G$, then $G$ is called
an ultradistribution semigroup of $*$-class, in short, (UDSG). A
pre-(UDSG) $G$ is said to be dense if it additionally satisfies $(U.3)$.
\end{defn}

\begin{defn}\label{d62}
An L-ultradistribution semigroup $G$ of $*$-class is said to be exponential,
EL-ultradistribution semigroup of $*$-class in short, if in addition to
$(U.1)-(U.4),$ $G$ fulfills:
$$
(U.6)\;\;(\exists a\geq 0) (e^{-a\cdot }G\in {\mathcal
S}'^{*}({\mathbb R},L(E))).\;
$$
Then we say that $G$ is of order $a$.
Conditions $(U.6),$  $(U.5)$, resp., $(U.6),$ $(U.5)$ and $(U.2)$,
define an exponential pre-(UDSG), resp., exponential (UDSG), and
they are denoted by pre-(EUDSG), resp., (EUDSG).
\end{defn}

The definition of (infinitesimal) generator $A$ of an ultradistribution semigroup $G$ of $\ast$-class is defined by $A:=\{(x,y)\in E\times E : G(-\varphi^{\prime})x=G(\varphi)y\mbox{ for all }\varphi \in \DD_0^{\ast}\}.$ Since every L-ultradistribution semigroup $G$ of $\ast$-class is also an ultradistribution semigroup of $\ast$-class (\cite{kps}), the definition of (infinitesimal) generator of such a semigroup is clear. We refer the reader to \cite{kps} for the notion of an (exponential) ultradistribution fundamental solution for a closed linear operator $A$ acting on $E.$

For the sake of convenience, we shall remind the reader of the following important result.

\begin{thm} (\cite{ieop}) \label{holh} Suppose $A$ is a closed linear operator on $E$. Then there exists an exponential ultradistribution fundamental solution of $\ast$-class for $A$ iff there exist $a\geq0$, $k>0$ and $L>0$, in the Beurling case, resp., there exists $a\geq0$ such that, for every $k>0$ there exists $L_k>0$, in the Roumieu case, such that: $$\{\lambda\in{\mathbb C} : \Re\lambda>a\}\subseteq\rho(A), \mbox{ and}$$
$$\|R(\lambda:A)\|\leq Le^{M(k|\lambda|)},\lambda\in{\mathbb C},\ \Re\lambda>a, resp.,$$
$$\|R(\lambda:A)\|\leq L_k e^{M(k|\lambda|)}\quad\mbox{for all}\quad k>0 \quad\mbox{and}\quad \lambda\in{\mathbb C}, \Re\lambda>a.$$\end{thm}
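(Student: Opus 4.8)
The plan is to prove both implications of Theorem \ref{holh} by the standard route connecting ultradistribution fundamental solutions to resolvent bounds via Laplace transform techniques, adapted to the exponential (tempered) setting. First I would recall that an exponential ultradistribution fundamental solution $E$ of $\ast$-class for $A$ is, by definition, an element of $\mathcal{S}'^{*}(\mathbb{R},L([D(A)],E))$ (or the appropriate paired space) supported in $[0,\infty)$ that inverts $P(d/dt)$ in the distributional sense, where $P$ plays the role of $\big(\frac{d}{dt}-A\big)$; the condition $(U.6)$-type exponential growth means precisely that $e^{-a\cdot}E$ is a tempered ultradistribution. For the forward direction, I would apply the (vector-valued, ultradistributional) Laplace transform to $E$. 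Because $e^{-a\cdot}E\in\mathcal{S}'^{*}$, its Laplace transform $\widetilde{E}(\lambda)$ is well-defined and holomorphic on $\{\Re\lambda>a\}$, and the tempered ultradistribution structure forces the Paley--Wiener--Schwartz-type bound $\|\widetilde{E}(\lambda)\|\le L e^{M(k|\lambda|)}$ in the Beurling case (resp. the family of bounds indexed by $k$ in the Roumieu case). The defining convolution identity for the fundamental solution transforms into $(\lambda I-A)\widetilde{E}(\lambda)x=x$ and $\widetilde{E}(\lambda)(\lambda I-A)x=x$ on $D(A)$, which shows $\{\Re\lambda>a\}\subseteq\rho(A)$ with $R(\lambda:A)=\widetilde{E}(\lambda)$, giving exactly the claimed resolvent estimate.

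For the converse, assume the resolvent bound holds on $\{\Re\lambda>a\}$. I would construct the fundamental solution by the inverse Laplace transform: define, for a suitable vertical contour $\Gamma=\{a+1+i\tau:\tau\in\mathbb{R}\}$ (or $\Re\lambda=a+\eps$),
\[
E:=\frac{1}{2\pi i}\int_{\Gamma} e^{\lambda t} P(\lambda)^{-1}\,d\lambda
\]
interpreted as an $L(E)$-valued ultradistribution of $\ast$-class; the exponential bound $\|R(\lambda:A)\|\le Le^{M(k|\lambda|)}$ is exactly what is needed to make this contour integral converge after pairing with a test function $\varphi\in\mathcal{S}^{*}$ whose Fourier--Laplace transform decays faster than $e^{-M(k'|\lambda|)}$ for all $k'$ (Beurling) or for some $k'$ (Roumieu) — this is where the class condition and the defining properties of the test spaces $\mathcal{S}^{(M_p)}$, $\mathcal{S}^{\{M_p\}}$ are used. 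One then checks that $E$ is supported in $[0,\infty)$ by shifting the contour to $\Re\lambda\to+\infty$ and using holomorphy, that $e^{-a'\cdot}E$ is tempered for $a'>a$ (so $(U.6)$ holds), and that $E$ solves the requisite convolution equation with $P$, i.e. it is an exponential ultradistribution fundamental solution.

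The main obstacle I anticipate is the careful bookkeeping in the converse direction: verifying that the contour integral defining $E$ genuinely lies in $\mathcal{S}'^{*}(\mathbb{R},L(E))$ and not merely in some larger space of ultradistributions, which requires matching the growth rate $e^{M(k|\lambda|)}$ of the resolvent against the decay afforded by the ultradifferentiable test functions — the interplay between the Beurling ``for some $k$'' and Roumieu ``for every $k$'' quantifiers must be tracked on both sides, and one must use the structural properties of the associated function $M(\rho)$ (its convexity-type behavior and the estimates $M(\rho)+M(\sigma)\le M(H\rho\sigma)+\text{const}$ following from $(M.2)$) to absorb products of exponentials. The support condition and the algebraic identity $(\lambda I-A)\widetilde E(\lambda)=I$ are comparatively routine once the functional-analytic framework is in place, and I would lean on \cite{ieop}, \cite{kps}, \cite{kpsv} for the precise form of these manipulations in the ultradistributional context.
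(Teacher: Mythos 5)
The paper does not prove Theorem \ref{holh} at all: it is quoted verbatim from \cite{ieop} and used as a black box, so there is no in-paper argument to measure your proposal against. What you sketch is the standard Laplace-transform/Paley--Wiener route that the cited source (and the older literature on ultradistribution semigroups) actually follows, and you get the delicate quantifier bookkeeping right: in the Beurling case the continuity of $e^{-a\cdot}G$ on the projective limit yields a bound through a single norm $\|\cdot\|_{M_p,h}$, hence ``some $k$'', while in the Roumieu case continuity on the inductive limit yields a bound for every $h$, hence ``every $k$ with constant $L_k$''; conversely the decay $e^{-M(k'|\lambda|)}$ of transforms of test functions (every $k'$ for Beurling, some $k'$ for Roumieu) is matched against the resolvent growth using $(M.2)$ exactly as you describe.

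Two places in your sketch need repair before it is a proof. In the forward direction, $e^{-\lambda t}$ is not an element of $\mathcal{S}^{*}(\mathbb{R})$ (it blows up at $-\infty$), so the ``Laplace transform of $G$'' must be defined by pairing $e^{-a\cdot}G$ with $e^{(a-\lambda)\cdot}\chi$, where $\chi$ is an ultradifferentiable cutoff equal to $1$ near $[0,\infty)$, and one must invoke the support property $\mbox{supp}(G)\subseteq[0,\infty)$ to see that the definition is independent of $\chi$; only then do the seminorm estimates give $\|R(\lambda:A)\|\leq Le^{M(k|\lambda|)}$. More seriously, in the converse direction your pairing of the contour integral directly with $\varphi\in\mathcal{S}^{*}$ does not converge: for $\varphi$ merely tempered, the inner integral $\int_{\mathbb{R}}e^{\lambda t}\varphi(t)\,dt$ diverges on the line $\Re\lambda=a+\varepsilon$, since tempered ultradifferentiable functions need not decay exponentially. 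The correct bookkeeping is to first define $G\in\mathcal{D}'^{*}_{+}(\mathbb{R},L(E))$ by pairing with compactly supported test functions (whose Fourier--Laplace transforms are entire with the Paley--Wiener bounds you quote), prove the support and convolution identities there, and only afterwards show that $e^{-a'\cdot}G$, $a'>a$, extends continuously to $\mathcal{S}^{*}(\mathbb{R})$, using that the weight $e^{(\lambda-a')t}$ decays on $[0,\infty)$ for $\Re\lambda=a+\varepsilon<a'$. With these adjustments your outline coincides with the known proof; as written, the convergence of the defining integral in $\mathcal{S}'^{*}$ is asserted rather than established.
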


Before proceeding further,  we would like to write down the following facts about (exponential) ultradistribution semigroups of $\ast$-class. If the operator $A$ generates an (exponential) ultradistribution semigroup $G$ of $\ast$-class, then there exists an injective operator $C\in L(E)$ such that the operator $A$ generates a (an exponentially bounded) $C$-regularized semigroup $(T(t))_{t\geq 0}$ on $E$ (cf. \cite[Section 3.6]{kniga}). This implies that there exist two Fr\' echet spaces $Y$ and $W$ such that $Y\hookrightarrow X\hookrightarrow W$ ($\hookrightarrow$ means the continuous embedding), as well as that $A_{|Y}$ generates a $C_{0}$-semigroup on $Y$ and that $A=B_{|X},$ where $B$ generates a $C_{0}$-semigroup on $W$; without going into full details, it should be only noticed that in $(M_{p})$-case $Y$ may be chosen to be exactly $E^{(M_{p})}(A),$ the abstract Beurling space of the operator $A$ (then $A_{|E^{(M_{p})}(A)}\in L(E^{(M_{p})}(A));$ cf. \cite[Theorem 3.6.6]{kniga} for more details). Furthermore, if the ultradistribution semigroup $G$ generated by $A$ is exponential, then $Y$ and $W$ may be chosen to be Banach spaces  (\cite{ralf}).

Let $\alpha>0,$ $\beta \in {\mathbb R},$ $\gamma \in (0,1),$ and let $m=\lceil \alpha \rceil.$ Then the Caputo fractional derivative
${\mathbf D}_{t}^{\alpha}u(t)$ is defined for those functions $u\in
C^{m-1}([0,\infty) : E)$ for which $g_{m-\alpha} \ast
(u-\sum_{k=0}^{m-1}u_{k}\frac{\cdot^{k}}{k!}) \in C^{m}([0,\infty) : E),$
by
$$
{\mathbf
D}_{t}^{\alpha}u(t)=\frac{d^{m}}{dt^{m}}\Biggl[g_{m-\alpha}
\ast \Biggl(u-\sum_{k=0}^{m-1}u_{k}\frac{\cdot^{k}}{k!}\Biggl)\Biggr].
$$
The
Mittag-Leffler function $E_{\alpha,\beta}(z)$ is defined by
$
E_{\alpha,\beta}(z):=\sum_{n=0}^{\infty}(z^{n}/\Gamma(\alpha
n+\beta)),$ $z\in {\mathbb C}.$
Here we assume that
$1/\Gamma(\alpha n+\beta)=0$ if $\alpha n+\beta \in -{{\mathbb
N}_{0}}.$ Set, for short, $E_{\alpha}(z):=E_{\alpha,1}(z),$ $z\in
{\mathbb C}.$
The Wright function
$\Phi_{\gamma}(\cdot)$ is defined by
$
\Phi_{\gamma}(t):={{\mathcal
L}^{-1}}(E_{\gamma}(-\lambda))(t),$ $t\geq 0,
$ where ${\mathcal L}^{-1}$ denotes the inverse Laplace transform.
We refer the reader to \cite{bajlekova} and \cite[Section 1.3]{knjigaho} for more details about Mittag-Leffler and Wright functions.

\section{Characterization of exponential ultradistribution semigroups}

The following proposition is motivated by D. Fujiwara's research  of exponential distribution semigroups (cf. \cite[Lemma 1]{fuji}).

\begin{prop}\label{lora}
\begin{itemize}
\item[(i)] Let $a_0\geq 0,$ and let $e^{-a_{0}\cdot}G\in L(\mathcal{S}^{(M_p)}(\mathbb{R}),L(E)).$ Then there exists $h'>0$ such that, for every $a>a_0$ and for every non-empty compact set $K\subseteq [0,\infty)$, there exists $C>0$ such that for any $\varphi,\psi\in\DD_0^{(M_{p})}$ with supp$(\varphi) \cup$ supp$(\psi)\subseteq K,$ the following estimate holds:
\begin{equation}\label{ocena}\bigl \|e^{-at}G(\varphi\ast\psi)\bigr \|_{L(E)}\leq C\|\varphi\|_{\DD_K^{\{M_p\},h'}}\|\psi\|_{L^1}.
\end{equation}
\item[(ii)] Let $a_0\geq 0,$ and let $e^{-a_{0}\cdot}G\in L(\mathcal{S}^{\{M_p\}}(\mathbb{R}),L(E)).$ Then, for every $h'>0,$ for every $a>a_0,$ and for every non-empty compact set $K\subseteq [0,\infty)$, there exists $C>0$ such that for any $\varphi,\psi\in\DD_K^{\{M_{p}\},h'},$ the estimate (\ref{ocena}) holds.
\end{itemize}
\end{prop}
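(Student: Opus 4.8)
The plan is to exploit the fact that, by hypothesis, $e^{-a_0\cdot}G$ extends to a continuous linear operator from $\mathcal{S}^{*}(\mathbb{R})$ into $L(E)$, together with the convolution identity $(U.1)$: $G(\varphi\ast\psi)=G(\varphi)G(\psi)$ for $\varphi,\psi\in\DD_0^{*}$. The key observation, borrowed from Fujiwara's argument for exponential distribution semigroups, is that one should not estimate $G(\varphi\ast\psi)$ by splitting it as the product $G(\varphi)G(\psi)$, but rather rewrite $e^{-at}G(\varphi\ast\psi)$ directly as $G$ applied to a single auxiliary test function, namely (roughly) $t\mapsto e^{-at}(\varphi\ast\psi)(t)$ up to a manageable correction. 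Since $G$ applied to such a function is controlled by a single $\mathcal{S}^{*}$-seminorm of that function, the whole estimate reduces to a seminorm estimate for $e^{-at}(\varphi\ast\psi)$ in terms of $\|\varphi\|_{\DD_K^{\{M_p\},h'}}$ and $\|\psi\|_{L^1}$.

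The steps, in order. First, fix $a>a_0$ and write $a=a_0+\delta$ with $\delta>0$. For $\varphi,\psi$ supported in the compact $K\subseteq[0,\infty)$, the function $e^{-at}(\varphi\ast\psi)(t)$ is supported in $K+K$, a fixed compact set, and one has the pointwise factorization $e^{-at}(\varphi\ast\psi)(t)=\bigl(e^{-a\cdot}\varphi\bigr)\ast\bigl(e^{-a\cdot}\psi\bigr)(t)$ — this is exactly the identity that makes the exponential weight pass through the convolution. Second, use $(U.1)$ in the form $e^{-at}G(\varphi\ast\psi)=e^{-at}G\bigl((e^{a\cdot}\varphi_a)\ast(e^{a\cdot}\psi_a)\bigr)$ where $\varphi_a=e^{-a\cdot}\varphi$ etc., but the cleaner route is simply: $e^{-a\cdot}G$ is the well-defined $\mathcal{S}^{*}$-continuous map, and applying it to the product structure gives $\|e^{-at}G(\varphi\ast\psi)\|_{L(E)}\le\|e^{-a_0\cdot}G\|\cdot\|e^{-\delta\cdot}(\varphi\ast\psi)\|_{\mathcal{S}^{*}}$-type seminorm. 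Third, estimate that $\mathcal{S}^{*}$-seminorm: because $\varphi\ast\psi$ is supported in the fixed compact $K+K$, the polynomial weights $(1+t^2)^{\beta/2}$ are bounded there, so only the derivative weights $h^{\alpha}/M_\alpha\,|(\varphi\ast\psi)^{(\alpha)}(t)|$ matter; using $(\varphi\ast\psi)^{(\alpha)}=\varphi^{(\alpha)}\ast\psi$ and $\|\varphi^{(\alpha)}\ast\psi\|_\infty\le\|\varphi^{(\alpha)}\|_\infty\|\psi\|_{L^1}$, one gets $h^{\alpha}/M_\alpha\,\|(\varphi\ast\psi)^{(\alpha)}\|_\infty\le (h/h')^{\alpha}\cdot h'^{\alpha}/M_\alpha\,\|\varphi^{(\alpha)}\|_\infty\,\|\psi\|_{L^1}\le \|\varphi\|_{\DD_K^{\{M_p\},h'}}\|\psi\|_{L^1}$ provided $h\le h'$. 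Fourth, in the Beurling case (i) the continuity of $e^{-a_0\cdot}G: \mathcal{S}^{(M_p)}(\mathbb{R})\to L(E)$ yields one specific $h$ (hence a specific $h'\ge h$) for which the bounding seminorm is finite, which is exactly the $h'$ in the statement; in the Roumieu case (ii), $\mathcal{S}^{\{M_p\}}(\mathbb{R})=\lim\mathrm{ind}_{h\to 0}\mathcal{S}^{M_p,h}$ is an inductive limit, so continuity gives a bound in terms of \emph{every} seminorm $\|\cdot\|_{M_p,h}$ with $h$ small, which is why in (ii) the estimate holds for every $h'>0$.

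The main obstacle — and the place requiring care — is the bookkeeping of which index $h$ is controlled in which case, and the interchange between the $\DD_K^{\{M_p\},h'}$-norm (which uses the weight $h'^{-\alpha}M_\alpha$ in the numerator, i.e. $\sup|D^\alpha\varphi|/(h'^{-\alpha}M_\alpha)$) and the $\mathcal{S}^{M_p,h}$-seminorm (which uses $h^\alpha/M_\alpha$). One must track carefully that "large $h$" in the projective-limit description $\DD_K^{(M_p)}=\lim\mathrm{proj}_{h\to+\infty}$ corresponds to the \emph{strongest} norm, and match it against the direction in which $\mathcal{S}^{(M_p)}$-continuity is available; getting the quantifier order right ("there exists $h'$ such that for all $a>a_0$ and all $K$" in the Beurling case versus "for all $h'$, all $a>a_0$, all $K$" in the Roumieu case) is the whole content of distinguishing (i) from (ii). A secondary technical point is justifying that one may indeed apply $e^{-a_0\cdot}G$, a priori only defined on $\mathcal{S}^{*}(\mathbb{R})$, to the compactly supported function $e^{-\delta\cdot}(\varphi\ast\psi)\in\DD_0^{*}\subseteq\mathcal{S}^{*}(\mathbb{R})$ and that this agrees with $G(\varphi\ast\psi)$ up to the exponential factor — this follows from the density of $\DD_0^{*}$ in $\mathcal{S}^{*}$ and the definition of $(U.6)$, but should be stated.
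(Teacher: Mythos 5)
Your overall strategy coincides with the paper's own proof: neither argument actually uses $(U.1)$; both rest on the identity $\langle e^{-a\cdot }G , \varphi\ast \psi \rangle=\langle e^{-a_{0} \cdot }G, e^{(a_{0}-a)\cdot}(\varphi\ast \psi)\rangle$ together with $e^{(a_{0}-a)\cdot}(\varphi\ast\psi)=(e^{(a_{0}-a)\cdot}\varphi)\ast(e^{(a_{0}-a)\cdot}\psi)$, then estimate the $\mathcal{S}^{M_p,h}$-seminorm of this auxiliary function (the compact support kills the weights $(1+t^2)^{\beta/2}$ at the cost of a constant depending on $K$, and Young's inequality puts $\psi$ into $L^1$), and finally invoke the continuity of $e^{-a_0\cdot}G$ on $\mathcal{S}^{M_p,h}$. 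The paper's extra device, the identity $\delta'\ast\gamma_1=\delta+\omega_1$ with $\gamma_1=H\zeta$, $\omega_1=H\zeta'$ inherited from Fujiwara, is not essential to the estimate, so omitting it is not a defect of your proposal.

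There is, however, a genuine gap in your third step: the function to which $e^{-a_0\cdot}G$ is applied is $e^{(a_{0}-a)\cdot}(\varphi\ast\psi)$, not $\varphi\ast\psi$, and you estimate only the derivatives of the latter. Differentiating the exponential factor produces, via the Leibniz rule, terms $\binom{\alpha}{j}(a-a_0)^{\alpha-j}\varphi^{(j)}$; after converting $\|\varphi^{(j)}\|_{L^\infty(K)}\leq (h')^{-j}M_j\|\varphi\|_{\DD_K^{\{M_p\},h'}}$ one must compare $M_j$ with $M_\alpha$ using $(M.1)$ and $(M.2)$ and sum a geometric series in $j$, which converges only when $h$ is below roughly $h'H/(2(1+a-a_0))$; this is precisely the threshold $h\in(0,h'H/4(1+a-a_0))$ around which the paper's entire computation is organized, not your condition $h\leq h'$. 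As a consequence your Beurling bookkeeping in the fourth step does not follow: continuity on the projective limit $\mathcal{S}^{(M_p)}(\mathbb{R})$ gives domination by one fixed seminorm $\|\cdot\|_{M_p,h_0}$, and the method then forces $h'>4(1+a-a_0)h_0/H$, i.e.\ an $h'$ that depends on $a$, so your claim that the $h'$ produced by continuity is ``exactly the $h'$ in the statement,'' uniformly in $a>a_0$, is not justified by your estimate (in fairness, the paper's own closing sentence is equally loose here, since it invokes continuity of $e^{-a_0\cdot}G$ on $\mathcal{S}^{M_p,h}$ for an arbitrarily chosen $h$, which is only available in the Roumieu inductive-limit case). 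Your reasoning for part (ii) is sound once the corrected, $a$-dependent threshold for $h$ replaces the condition $h\leq h'$.
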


\begin{proof}
Let $\zeta \in{\mathcal D}^{\{M_p\}}$, and let $\zeta(t)=1$ for all $t\in [-1, 1].$ Put $\gamma_1:=H\zeta$ ($H$ is the Heaviside function). Then
$\gamma'_1=\delta +H\zeta'$ in the sense of Roumieu ultradistributions. So, with $\omega_1=H\zeta'\in\DD_0^{\{M_p\}}$,
we have
\begin{equation}\label{paraal}
\delta'\ast\gamma_1=\delta+\omega_1.
\end{equation}
Let the numbers $h'>0$ and $a>a_{0}$ be fixed, let $K$ be a non-empty compact subset of $[0,\infty),$ and let $\varphi,\psi\in\DD_K^{\{M_{p}\},h'}.$ We will prove that the function $e^{(a_{0}-a)\cdot}(\varphi \ast \psi)(\cdot) \in {\mathcal S}^{M_{p}, h}({\mathbb R})$
for all $h\in (0,h'H/4(1+a-a_{0}));$ here $H$ denotes the constant from the condition $(M.2).$ Set $K':=(K+$supp$(\gamma_{1})) \cup (K+$supp$(\omega_{1}))$ and $K'':=K+K'.$ Using the product rule, Young's inequality and the condition $(M.2),$ we have that
\begin{align*}
& \bigl \|  e^{(a_{0}-a)\cdot}(\varphi \ast \psi)(\cdot) \bigr \|_{M_p,h}=\sup\Biggl\{\frac{h^{\alpha+\beta}}{M_\alpha M_\beta}\bigl(1+t^2\bigr)^{\beta/2}\Bigl| \bigl \langle \delta^{(\alpha)}\ast e^{(a_{0}-a)\cdot}(\varphi \ast \psi)(\cdot)\bigr \rangle(t)\Bigr|:t\in\mathbb{R},
\;\alpha,\;\beta\in\mathbb{N}_0\Biggr\}
\\ & =\sup \limits_{t\in\mathbb{R},
\;\alpha,\;\beta\in\mathbb{N}_0}\Biggl\{\frac{h^{\alpha+\beta}\bigl(1+t^2\bigr)^{\beta/2}}{M_\alpha M_\beta}\Bigl| \bigl \langle \delta^{(\alpha +1)} \ast \gamma_{1}\ast (e^{(a_{0}-a)\cdot} \varphi \ast e^{(a_{0}-a)\cdot} \psi)\bigr \rangle(t)- \bigl \langle \delta^{(\alpha)} \ast \omega_{1}\ast (e^{(a_{0}-a)\cdot} \varphi \ast e^{(a_{0}-a)\cdot} \psi)\bigr \rangle(t)\Bigr|\Biggr\}
\\ & \leq  \sup \limits_{t\in\mathbb{R},
\;\alpha,\;\beta\in\mathbb{N}_0}\Biggl\{\frac{h^{\alpha+\beta}\bigl(1+t^2\bigr)^{\beta/2}}{M_\alpha M_\beta}
 \int_{{\mathbb R}}\Bigl|\Bigl(e^{(a_{0}-a)\cdot} \varphi(\cdot)\Bigr)^{(\alpha +1)}(t-s)\Bigr| \ \Bigl| \bigl( e^{(a_{0}-a)\cdot}\psi \ast \gamma_{1}\Bigr)(s)\Bigr|\, ds
\Biggr\}
\\ & + \sup \limits_{t\in\mathbb{R},
\;\alpha,\;\beta\in\mathbb{N}_0}\Biggl\{\frac{h^{\alpha+\beta}\bigl(1+t^2\bigr)^{\beta/2}}{M_\alpha M_\beta} \int_{{\mathbb R}}\Bigl|\Bigl(e^{(a_{0}-a)\cdot} \varphi(\cdot)\Bigr)^{(\alpha)}(t-s)\Bigr| \ \Bigl| \bigl( e^{(a_{0}-a)\cdot}\psi \ast \omega_{1}\Bigr)(s)\Bigr|\, ds \Biggr\}
\\ & =\sup \limits_{t\in K'',
\;\alpha,\;\beta\in\mathbb{N}_0}\Biggl\{\frac{h^{\alpha+\beta}\bigl(1+t^2\bigr)^{\beta/2}}{M_\alpha M_\beta}
 \int_{{K'}}\Bigl|\Bigl(e^{(a_{0}-a)\cdot} \varphi(\cdot)\Bigr)^{(\alpha +1)}(t-s)\Bigr| \ \Bigl| \bigl( e^{(a_{0}-a)\cdot}\psi \ast \gamma_{1}\Bigr)(s)\Bigr|\, ds
\Biggr\}
\\ & + \sup \limits_{t\in K'',
\;\alpha,\;\beta\in\mathbb{N}_0}\Biggl\{\frac{h^{\alpha+\beta}\bigl(1+t^2\bigr)^{\beta/2}}{M_\alpha M_\beta} \int_{K'}\Bigl|\Bigl(e^{(a_{0}-a)\cdot} \varphi(\cdot)\Bigr)^{(\alpha)}(t-s)\Bigr| \ \Bigl| \bigl( e^{(a_{0}-a)\cdot}\psi \ast \omega_{1}\Bigr)(s)\Bigr|\, ds \Biggr\}
\\ & \leq e^{M\bigl(h\sqrt{1+(\sup K'')^{2}}\bigr)}\sup \limits_{t\in K'',
\;\alpha\in\mathbb{N}_0}\Biggl\{\frac{h^{\alpha}}{M_\alpha}
 \int_{{K'}}\Bigl|\Bigl(e^{(a_{0}-a)\cdot} \varphi(\cdot)\Bigr)^{(\alpha +1)}(t-s)\Bigr| \ \Bigl| \bigl( e^{(a_{0}-a)\cdot}\psi \ast \gamma_{1}\Bigr)(s)\Bigr|\, ds
\Biggr\}
\end{align*}
\begin{align*}
\\ & + e^{M\bigl(h\sqrt{1+(\sup K'')^{2}}\bigr)} \sup \limits_{t\in K'',
\;\alpha \in\mathbb{N}_0}\Biggl\{\frac{h^{\alpha}}{M_\alpha} \int_{K'}\Bigl|\Bigl(e^{(a_{0}-a)\cdot} \varphi(\cdot)\Bigr)^{(\alpha)}(t-s)\Bigr| \ \Bigl| \bigl( e^{(a_{0}-a)\cdot}\psi \ast \omega_{1}\Bigr)(s)\Bigr|\, ds \Biggr\}
\\ & \leq 2(1+a-a_{0})e^{M\bigl(h\sqrt{1+(\sup K'')^{2}}\bigr)}\sup \limits_{
\;\alpha\in\mathbb{N}_0}\Biggl\{\frac{(2(1+a-a_{0})h)^{\alpha}}{M_\alpha}
\sum_{j=0}^{\alpha+1}\bigl \|\varphi^{(j)}\|_{L^{\infty}(K)}  \ \bigl\| e^{(a_{0}-a)\cdot}\psi \ast \gamma_{1}\bigr\|_{L^{1}({\mathbb R})}
\Biggr\}
\\ & + 2(1+a-a_{0})e^{M\bigl(h\sqrt{1+(\sup K'')^{2}}\bigr)}\sup \limits_{
\;\alpha\in\mathbb{N}_0}\Biggl\{\frac{(2(1+a-a_{0})h)^{\alpha}}{M_\alpha}
\sum_{j=0}^{\alpha}\bigl \|\varphi^{(j)}\|_{L^{\infty}(K)}  \ \bigl\| e^{(a_{0}-a)\cdot}\psi \ast \omega_{1}\bigr\|_{L^{1}({\mathbb R})}
\Biggr\}
\\ & \leq \frac{4}{A}(1+a-a_{0})e^{M\bigl(h\sqrt{1+(\sup K'')^{2}}\bigr)}\bigl\| \psi \bigr \|_{L^{1}({\mathbb R})} \Bigl[\bigl \| \gamma_{1}\bigr\|_{L^{1}({\mathbb R})}+\bigl \| \omega_{1}\bigr\|_{L^{1}({\mathbb R})}\Bigr]
\\ & \times \|\varphi\|_{\DD_K^{\{M_p\},h'}} \sup \limits_{
\;\alpha\in\mathbb{N}_0}\Biggl\{
\sum_{j=0}^{\alpha+1} \Bigl(\frac{2(1+a-a_{0})h}{h'}\Bigr)^{j}\frac{(2(1+a-a_{0})h)^{\alpha -j}M_{j}}{H^{j}M_{j}M_{\alpha-j}}
\Biggr\}
\\ & \leq \frac{4}{A}(1+a-a_{0})e^{M\bigl(h\sqrt{1+(\sup K'')^{2}}\bigr)}\bigl\| \psi \bigr \|_{L^{1}({\mathbb R})} \Bigl[\bigl \| \gamma_{1}\bigr\|_{L^{1}({\mathbb R})}+\bigl \| \omega_{1}\bigr\|_{L^{1}({\mathbb R})}\Bigr]
\\ & \times \|\varphi\|_{\DD_K^{\{M_p\},h'}} e^{M(2(1+a-a_{0})h)}
\sum_{j=0}^{\infty} \Bigl(\frac{2(1+a-a_{0})h}{h'H}\Bigr)^{j}
\\ & \leq \frac{4}{A}(1+a-a_{0})e^{M\bigl(h\sqrt{1+(\sup K'')^{2}}\bigr)}\bigl\| \psi \bigr \|_{L^{1}({\mathbb R})} \Bigl[\bigl \| \gamma_{1}\bigr\|_{L^{1}({\mathbb R})}+\bigl \| \omega_{1}\bigr\|_{L^{1}({\mathbb R})}\Bigr] \|\varphi\|_{\DD_K^{\{M_p\},h'}} e^{M(h'H/2)}.
\end{align*}
The final conclusion follows from the decomposition $\langle e^{-a\cdot }G , \varphi\ast \psi \rangle=\langle e^{-a_{0} \cdot }G, e^{(a_{0}-a)\cdot}(\varphi\ast \psi)\rangle$ and the continuity of mapping $e^{-a_{0} \cdot }G : {\mathcal S}^{M_{p},h}\rightarrow L(E),$ with the number $h\in (0,h'H/4(1+a-a_{0}))$ chosen arbitrarily.
\end{proof}

\begin{rem}
\begin{itemize}
\item[(i)]
Notice that we have not used the structural theorem for the space of tempered vector-valued ultradistributions in the proof (cf.  \cite[Theorem 2.2]{kpsv}) and that we do not need the condition $(M.3)$ here. Observe also that the proof of Proposition \ref{lora} works in the case that $E$ is a general sequentially complete locally convex space (\cite{novi}).
\item[(ii)] As mentioned above, Proposition \ref{lora} is inspired by Lemma 1 of \cite{fuji}. Regrettably, it seems that the assertions of \cite[Theorem 2-Theorem 3]{fuji} cannot be so simply reconsidered for exponential ultradistribution semigroups. Speaking-matter-of-factly, in ultradistribution case we cannot prove the existence of number $N$ such that an analog of the estimate \cite[(9)]{fuji} holds.
\end{itemize}
\end{rem}

In the remaining part of this section, we shall always assume that the condition $(M.3)$ holds.
The following entire function of exponential type zero (\cite{ci1}, \cite{k91}) plays an important role in our further work:
$$
\omega^{(M_{p})}(z)=:\prod_{i=1}^{\infty}\bigl(1+\frac{iz}{m_p}\bigr),\quad z\in\mathbb{C}.
$$
Then $| \omega^{(M_{p})}(z)  | \geq e^{M(|z|)},$ $z\in {\mathbb C}$
and
the properties (P.1)-(P.5) stated in \cite[Subsection 3.6.2]{kniga} hold.

Suppose now that $A$ generates an exponential ultradistribution semigroup of $(M_{p})$-class. By Theorem \ref{holh}, it readily follows that there exist numbers $a\geq0$, $k>0$ and $L>0$ such that $\{\lambda\in{\mathbb C} : \Re\lambda>a\}\subseteq\rho(A)$ and
$\|R(\lambda:A)\|\leq Le^{M(k|\lambda|)},$ $\lambda\in{\mathbb C},\ \Re\lambda>a .$ Let $\bar{a}>a.$ Then there exists a sufficiently large number $n_{0} \in {\mathbb N}$ such that, for every $n\in {\mathbb N}$ with $n\geq n_{0},$ the operator $A$ generates a global exponentially bounded $C_{n}$-regularized semigroup $(S_{n}(t))_{t\geq 0}$ on $E,$ where
$$
S_{n}(t)x:=\frac{1}{2\pi i}\int \limits_{\bar{a}-i\infty}^{\bar{a}+i\infty}e^{\lambda t}\frac{R(\lambda : A)x}{\omega^{n}(i\lambda)}\, d\lambda,\quad t\geq 0,\ n\in {\mathbb N},\ x\in E\mbox{ and }C_{n}:=S_{n}(0).
$$
cf. the proof of \cite[Theorem 3.6.4]{kniga} (in this result, we have clarified the result on generation of {\it local} $C$-regularized semigroups; observe, however, that in the case of exponential ultradistribution semigroups the resulting $C_{n}$-regularized semigroup is global and exponentially bounded because the integration is taken along the line connecting the points $\bar{a}-i\infty$ and $\bar{a}+i\infty,$ not along the upwards oriented boundary of a suitable ultra-logarithmic region of $(M_{p})$-class). An elementary application of Cauchy's formula, taken together with the consideration in the paragraph containing the equation \cite[(316)]{kniga} and \cite[Lemma 3.6.5, Theorem 3.6.6]{kniga}, implies that
$E^{(M_{p})}(A)\subseteq C_{n}(D_{\infty}(A)),$ $n\geq n_{0}$ and that for each $x\in E^{(M_{p})}(A)$ a unique solution of the abstract Cauchy problem
$$
(\text{\emph{ACP}}):\left\{
\begin{array}{l}
u\in C^{\infty}([0,\infty):E)\cap C([0,\infty):[D(A)]),\\
u'(t)=Au(t),\;t\geq 0,\\ u(0)=x,
\end{array}
\right.
$$
is given by $u(t)=S_{n}(t)C_{n}^{-1}x,$ $t\geq 0,$ $n\geq n_{0}.$ Using the same argumentation as in the proof of \cite[Theorem 3.6.4]{kniga}, we get that for each $x\in E^{(M_{p})}(A)$ and $h>0$ there exist $n\geq n_{0}$ and $c>0$ such that
\begin{equation}\label{jednazba}
\sup_{p\in {{\mathbb N}_{0}}}\frac{h^{p}\bigl \| \frac{d^{p}}{dt^{p}}u(t) \bigr \|}{M_{p}}\leq ce^{\bar{a}t}\bigl\| C_{n}^{-1}x\bigr \|,\quad t\geq 0.
\end{equation}
Applying \cite[Theorem 2.4.2]{knjigaho} (cf. also \cite[Theorem 3.1, Theorem 3.3]{bajlekova}) and the Ljubich uniqueness theorem for abstract time-fractional equations \cite[Theorem 2.1.34]{knjigaho}, it readily follows that
for each $x\in E^{(M_{p})}(A)$ and $\alpha \in (0,1)$ there exists a unique solution of the following abstract fractional Cauchy problem
$$
(\text{\emph{ACP}})_{\alpha}:\left\{
\begin{array}{l}
v\in C([0,\infty):E)\cap {\mathcal A}_{\min((\frac{1}{\alpha}-1)\frac{\pi}{2},\pi)}(E),\\
{\mathbf D}_{t}^{\alpha}v(t)=Av(t),\;t\geq 0,\\ v(0)=x,
\end{array}
\right.
$$
given by $v(t):=\int_{0}^{\infty}\Phi_{\alpha}(s)S_{n}(st^{\alpha})C_{n}^{-1}x\, ds,$ $t\geq 0$ ($n\geq n_{0}$). Thus, we have proved the following theorem.

\begin{thm}\label{glass}
Suppose that $(M.3)$ holds for $(M_{p})$ and $A$ generates an exponential ultradistribution semigroup of $(M_{p})$-class. Then for each
$x\in E^{(M_{p})}(A)$ there exists a unique solution $u(t)$ of the abstract Cauchy problem
$
(ACP)$ satisfying additionally that for each $h>0$ there exist $n\geq n_{0}$ and $c>0$ such that (\ref{jednazba}) holds. Furthermore, for each $x\in E^{(M_{p})}(A)$ and $\alpha \in (0,1)$ there exists a unique solution of the abstract fractional Cauchy problem $
(ACP)_{\alpha}.$
\end{thm}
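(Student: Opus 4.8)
The plan is to assemble into a formal proof the construction already prepared in the discussion preceding the statement. First I would apply Theorem~\ref{holh} in the Beurling case to fix constants $a\geq 0$, $k>0$ and $L>0$ with $\{\lambda\in{\mathbb C}:\Re\lambda>a\}\subseteq\rho(A)$ and $\|R(\lambda:A)\|\leq Le^{M(k|\lambda|)}$ on that half-plane, and then fix $\bar a>a$. Using the estimate $|\omega^{(M_p)}(z)|\geq e^{M(|z|)}$ together with the properties (P.1)--(P.5) of $\omega^{(M_p)}$ recorded after its definition, the contour integral defining $S_{n}(t)$ converges absolutely for every sufficiently large $n$, say $n\geq n_{0}$, and the usual resolvent calculus shows that $(S_{n}(t))_{t\geq 0}$ is a global, exponentially bounded $C_{n}$-regularized semigroup with generator $A$; this is the argument from the proof of \cite[Theorem~3.6.4]{kniga}, with the simplification that the contour here is the vertical line $\Re\lambda=\bar a$ rather than the upwards oriented boundary of an ultra-logarithmic region.

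Second, I would verify that $E^{(M_p)}(A)\subseteq C_{n}(D_{\infty}(A))$ for $n\geq n_{0}$. For $x\in E^{(M_p)}(A)$ one deforms the contour and applies Cauchy's formula as in the paragraph around \cite[(316)]{kniga} and \cite[Lemma~3.6.5, Theorem~3.6.6]{kniga}, obtaining that $C_{n}^{-1}x$ is well-defined and lies in $D_{\infty}(A)$. Setting $u(t):=S_{n}(t)C_{n}^{-1}x$, the $C_{n}$-regularized semigroup calculus gives $u\in C^{\infty}([0,\infty):E)\cap C([0,\infty):[D(A)])$ together with $u'(t)=Au(t)$ and $u(0)=x$, so $u$ solves $(ACP)$; uniqueness follows from the Ljubich-type uniqueness theorem, since the resolvent bound $Le^{M(k|\lambda|)}$ grows slower than any $e^{c|\lambda|}$. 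For the estimate~(\ref{jednazba}) I would differentiate under the integral sign, bound $\bigl\|\frac{d^{p}}{dt^{p}}u(t)\bigr\|$ by $e^{\bar a t}$ times an integral of $|\lambda|^{p}/|\omega^{n}(i\lambda)|$ against $\|R(\lambda:A)C_{n}^{-1}x\|$, and convert the factor $|\lambda|^{p}$ into one controllable by $M_{p}h^{-p}$ via $|\omega^{(M_p)}(z)|\geq e^{M(|z|)}$ and the definition of the associated function; enlarging $n$ in dependence of $h$ absorbs the remaining growth, exactly as in \cite[Theorem~3.6.4]{kniga}.

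Third, for the fractional problem I would invoke the subordination principle \cite[Theorem~2.4.2]{knjigaho} (cf. also \cite[Theorem~3.1, Theorem~3.3]{bajlekova}): since $A$ generates the exponentially bounded $C_{n}$-regularized semigroup $(S_{n}(t))_{t\geq 0}$, the family $v(t):=\int_{0}^{\infty}\Phi_{\alpha}(s)S_{n}(st^{\alpha})C_{n}^{-1}x\, ds$ is well-defined, satisfies ${\mathbf D}_{t}^{\alpha}v(t)=Av(t)$ with $v(0)=x$, and extends to an $E$-valued analytic function on $\Sigma_{\min((\frac{1}{\alpha}-1)\frac{\pi}{2},\pi)}$ because $\Phi_{\alpha}$ decays rapidly and $S_{n}(\cdot)$ admits an analytic extension in the relevant sector; uniqueness within this class follows from the Ljubich uniqueness theorem for abstract time-fractional equations \cite[Theorem~2.1.34]{knjigaho}.

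The main obstacle is concentrated in the second step: the construction of $S_{n}(t)$ is classical, but showing $E^{(M_p)}(A)\subseteq C_{n}(D_{\infty}(A))$ and establishing~(\ref{jednazba}) with the correct dependence of $n$ and $c$ on $h$ requires carefully balancing the resolvent bound $Le^{M(k|\lambda|)}$, the lower bound $|\omega^{(M_p)}|\geq e^{M(|\cdot|)}$, and the defining norm of $E^{\{M_p\}}_{h}(A)$; it is precisely at this point that the condition $(M.3)$ enters, through the properties of the entire function $\omega^{(M_p)}$.
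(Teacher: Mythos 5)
Your proposal is correct and follows essentially the same route as the paper: the paper's argument is precisely the construction of the exponentially bounded $C_{n}$-regularized semigroups $S_{n}(t)$ by integration along the line $\Re\lambda=\bar{a}$ (via Theorem \ref{holh} and the proof of \cite[Theorem 3.6.4]{kniga}), the inclusion $E^{(M_{p})}(A)\subseteq C_{n}(D_{\infty}(A))$ through Cauchy's formula and \cite[Lemma 3.6.5, Theorem 3.6.6]{kniga}, the estimate (\ref{jednazba}) by the argument of \cite[Theorem 3.6.4]{kniga}, and the fractional part by subordination \cite[Theorem 2.4.2]{knjigaho} together with the Ljubich uniqueness theorem \cite[Theorem 2.1.34]{knjigaho}. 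The only cosmetic difference is that you make explicit the uniqueness argument for $(ACP)$ and the balancing of the resolvent bound against $|\omega^{(M_{p})}|\geq e^{M(|\cdot|)}$, which the paper delegates to the cited results.
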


It is worth noting that the assertion of Theorem \ref{glass} continues to hold, with suitable modifications, in the setting of Fr\' echet spaces and that possible applications can be made to differential operators considered in \cite[Example 3.5.15]{kniga} and \cite[Example 9.3]{novi}.

% ------------------------------------------------------------------------
\end{document}